\newtheorem{theo}{Theorem}
\newtheorem{prop}{Proposition}
\newtheorem{coro}{Corollary}
\newcommand\la{\rho}
\numberwithin{equation}{section}
\title{A new generalization of 
binomial coefficients}
\author{Michel Lassalle\\
\small Centre National de la Recherche Scientifique\\[-0.8ex]
\small Institut Gaspard-Monge, Universit\'e de Marne-la-Vall\'ee\\[-0.8ex]
\small 77454 Marne-la-Vall\'ee Cedex, France\\[-0.8ex]
\small \texttt{lassalle@univ-mlv.fr}\\[-0.8ex]
\small \texttt{http://igm.univ-mlv.fr/{\textasciitilde}lassalle}
}
\date{}
\begin{document}
\maketitle

\begin{abstract}
Let $t$ be a fixed parameter and $x$ some indeterminate. We give some properties of the generalized binomial coefficients $\genfrac{\langle}{\rangle}{0pt}{}{x}{k}$ inductively defined by $k/x \,\genfrac{\langle}{\rangle}{0pt}{}{x}{k}=
t\genfrac{\langle}{\rangle}{0pt}{}{x-1}{k-1}
+(1-t)\genfrac{\langle}{\rangle}{0pt}{}{x-2}{k-2}$.
\end{abstract}

\section{Definition}

There are many generalizations of binomial coefficients, the most elementary of which are the Gaussian polynomials. In this note, we shall present another one-parameter extension, presumably new, encountered in the study of the symmetric groups~\cite{La}. This application will be described at the end.

Let $t$ be a fixed parameter and $x$ some indeterminate. For any positive integer $k$, we define a function $\genfrac{\langle}{\rangle}{0pt}{}{x}{k}$ inductively by $\genfrac{\langle}{\rangle}{0pt}{}{x}{k}=0$ for $k<0$, $\genfrac{\langle}{\rangle}{0pt}{}{x}{0}=1$ and
\[\frac{k}{x}\genfrac{\langle}{\rangle}{0pt}{}{x}{k}=
t\genfrac{\langle}{\rangle}{0pt}{}{x-1}{k-1}
+(1-t)\genfrac{\langle}{\rangle}{0pt}{}{x-2}{k-2}.\]
Then $\genfrac{\langle}{\rangle}{0pt}{}{x}{k}$ is a polynomial with degree $k$ in $x$ and in $t$. First values are given by
\begin{align*}
&\genfrac{\langle}{\rangle}{0pt}{}{x}{1}=tx,\qquad\quad
\genfrac{\langle}{\rangle}{0pt}{}{x}{2}=t^2\,\binom{x}{2}+(1-t)\frac{x}{2},\\
&\genfrac{\langle}{\rangle}{0pt}{}{x}{3}=t^3\,\binom{x}{3}+t(1-t)\,\binom{x}{2}-\frac{1}{3}t(1-t)x,\\
&\genfrac{\langle}{\rangle}{0pt}{}{x}{4}=t^4\,\binom{x}{4}+\frac{3}{2}t^2(1-t)\,\binom{x}{3}-\frac{1}{12}(1-t)(8t^2+3t-3)\,\binom{x}{2}+\frac{1}{8}(1-t^2)(2t-1)x.
\end{align*}
For $k$ odd, it is obvious that $\genfrac{\langle}{\rangle}{0pt}{}{x}{k}$ is divisible by $t$. 

We have easily
\[\genfrac{\langle}{\rangle}{0pt}{}{x}{k}=t^k \binom{x}{k}+(1-t)xP(x,t),\]
with $P$ a polynomial of degree $k-2$ in $x$ and $t$. For $t=1$ we recover the classical binomial product
\[\binom{x}{k}=\frac{1}{k!}\prod_{i=1}^{k} (x-i+1),\]
and when $x$ is some positive integer $n$, the binomial coefficient $\binom{n}{k}$. 

In this paper we shall present some notable properties of the generalized binomial coefficients $\genfrac{\langle}{\rangle}{0pt}{}{x}{k}$, including a generating function, a Chu-Vandermonde identity and an explicit formula. The referee has suggested that it would be interesting to obtain a $q$-analogue of our results, using $q$-shifted factorials instead of ordinary ones, in the same way than~\cite{La2} has been generalized by~\cite{Z}.

\section{Generating function}

We consider the series
\begin{align*}
G(u)&=1+tu+(1-t)\sum_{k\ge0}\frac{(-u)^{k+2}}{(k+2)!}\prod_{i=0}^k (k-i+1+(t-1)i),\\
H(u)&=1+\sum_{k\ge1}\frac{(-u)^k}{k!}\prod_{i=1}^k (k-i+1+(t-1)i).
\end{align*}
We have
\[\frac{d}{du}G(u)=t+(1-t)u\,H(u).\]

\begin{theo}
The series $G(u)$ and $H(u)$ are mutually inverse.
\end{theo}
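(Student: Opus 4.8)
The statement asserts a \emph{multiplicative} inverse relation, $G(u)H(u)=1$ in $\mathbb{Q}[t][[u]]$: both series have constant term $1$, so no compositional inverse can be intended. My plan is to reduce the whole claim to a single differential identity for $H$ and then to feed in the already-established relation $\frac{d}{du}G=t+(1-t)uH$.

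First I would note that this relation together with $G(0)=1$ determines $G$ uniquely, since a formal power series is fixed by its constant term and its derivative; thus $G$ is the formal antiderivative of $t+(1-t)uH$. Consequently $G=1/H$ (legitimate as $H(0)=1$) will follow as soon as the series $1/H$ is shown to satisfy the same two conditions. As $(1/H)(0)=1$ holds automatically and $(1/H)'=-H'/H^2$, the entire theorem reduces to the single Bernoulli-type identity
\begin{equation*}
H'(u)=-t\,H(u)^2-(1-t)\,u\,H(u)^3,\qquad H(0)=1. \tag{$\star$}
\end{equation*}

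To prove $(\star)$ I would introduce an auxiliary series $\Phi$, normalized by $\Phi(0)=1$ and defined by the functional equation
\begin{equation*}
\Phi^{\,2-t}=1+(t-2)\,u\,\Phi. \tag{FE}
\end{equation*}
This is the crux and the step I expect to be the real obstacle: by Lagrange--Bürmann inversion applied to (FE) — equivalently, by recognizing $\Phi$ as a power of the generalized binomial series $\mathcal{B}_{\rho}$ with $\rho=1/(2-t)$ in the argument $(t-2)u$ — one computes $[u^k]\Phi=\frac{(-1)^k}{k!}\prod_{i=1}^{k-1}\bigl(k+1+(t-2)i\bigr)$. Rewriting the factor in the product defining $H$ as $k-i+1+(t-1)i=k+1+(t-2)i$ and comparing, the only discrepancy is the missing $i=k$ factor $(t-1)k+1$, which yields the representation $H=\Phi+(t-1)u\,\Phi'$. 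Taming the index $k$, which appears simultaneously as the number of factors and inside each factor, is precisely what Lagrange inversion is for; this identification is where essentially all the work lies.

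The rest is a short formal computation and I would present it tersely. Differentiating (FE) and eliminating the fractional power by means of (FE) itself gives $\Phi'=-\Phi^2/\bigl(1+(t-1)u\Phi\bigr)$, so the representation above collapses to the rational closed form $H=\Phi/D$ with $D:=1+(t-1)u\Phi$. Substituting $\Phi'=-\Phi^2/D$, both sides of $(\star)$ reduce to $-\Phi^2\bigl(t+(t-1)^2u\Phi\bigr)/D^3$, so $(\star)$ holds and hence $GH=1$. Finally, since $GH-1$ is coefficientwise a polynomial in $t$ and the argument above is valid for all $t\neq 2$, the identity persists at $t=2$ by continuity; the case $t=1$ is in any event transparent, with $\Phi=H=1/(1+u)$ and $G=1+u$.
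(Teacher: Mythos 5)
Your proof is correct, and it takes a genuinely different route from the paper's. The paper (crediting Krattenthaler) argues by direct convolution: setting $X_k=-\frac{k+1}{t-2}$ and $Y_k=X_{k-2}$, it rewrites the coefficients of $G$ and $H$ as $(t-2)^k\frac{1}{Y_k+1}\binom{Y_k+1}{k}$ (up to the factor $\frac{1-t}{2-t}$) and $(t-2)^k\binom{X_k-1}{k}$, then invokes Rothe's identity with $A=1+\frac{1}{t-2}$, $B=-\frac{1}{t-2}$, $C=-1-\frac{n+1}{t-2}$ to show the coefficient of $u^n$ in $H(u)G(u)$ vanishes for $n\ge 1$. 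You instead reduce $GH=1$ to the single Riccati-type identity $H'=-tH^2-(1-t)uH^3$, via the paper's displayed relation $G'=t+(1-t)uH$ and uniqueness of the antiderivative with constant term $1$, and then prove that identity in closed form through the auxiliary series $\Phi$ with $\Phi^{2-t}=1+(t-2)u\Phi$. I verified your key computations: the Lagrange-inversion coefficient formula for $\Phi$ (it is indeed the power $\mathcal{B}_\rho^{\,\rho}$ of the generalized binomial series at argument $(t-2)u$), the rewriting $k-i+1+(t-1)i=k+1+(t-2)i$ giving $H=\Phi+(t-1)u\Phi'$, the relation $\Phi'=-\Phi^2/(1+(t-1)u\Phi)$, the collapse $H=\Phi/D$ with $D=1+(t-1)u\Phi$, and the reduction of both sides of your identity to $-\Phi^2\bigl(t+(t-1)^2u\Phi\bigr)/D^3$. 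All are right. As to what each approach buys: the paper's is shorter once Rothe's identity is quoted off the shelf, while yours is self-contained modulo Lagrange inversion, absorbs the paper's Corollary 1 (the ODE characterization of $G$) into the proof itself, and yields as a bonus explicit algebraic closed forms $H=\Phi/(1+(t-1)u\Phi)$ and $G=(1+(t-1)u\Phi)/\Phi$ that the paper never obtains; at bottom the two are cousins, since Rothe's identity is precisely the convolution law for powers of the generalized binomial series. Two minor points: your appeal to ``continuity'' at $t=2$ is better phrased as a polynomial-identity argument (each coefficient of $GH-1$ is a polynomial in $t$ vanishing for all $t\neq 2$, hence identically zero), or sidestepped entirely by working over $\mathbb{Q}(t)$; and note that the paper's own proof has the same silent restriction, since its $X_k$ are undefined at $t=2$.
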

\begin{proof}
Krattenthaler has pointed out that the statement is a consequence of Rothe identity~\cite{Chu, Guo}
\[\sum_{k=0}^n\frac{A}{A+Bk}\binom{A+Bk}{k}\binom{C-Bk}{n-k}=\binom{A+C}{n}.\]
Actually if we denote
\[X_k=-\frac{k+1}{t-2},\qquad Y_k=X_{k-2}=-\frac{k-1}{t-2},\]
we have
\begin{align*}
G(u)&=1+tu+\frac{1-t}{2-t}\sum_{k\ge 2} u^k(t-2)^k\frac{1}{Y_k+1}\binom{Y_k+1}{k},\\
&=u+\frac{1-t}{2-t}\sum_{k\ge 0} u^k(t-2)^k\frac{1}{Y_k+1}\binom{Y_k+1}{k},\\
H(u)&=\sum_{k\ge 0} u^k(t-2)^k\binom{X_k-1}{k}.
\end{align*}
Rothe identity, written for 
\[A=1+\frac{1}{t-2},\qquad B=-\frac{1}{t-2},\qquad C=-1-\frac{n+1}{t-2},\]
yields
\[\frac{1-t}{2-t}\sum_{k=0}^n\frac{1}{Y_k+1}\binom{Y_k+1}{k}\binom{X_{n-k}-1}{n-k}=\binom{-n/(t-2)}{n}=\binom{X_{n-1}}{n},\]
since $X_{n-1}=-n/(t-2)$. Therefore for $n\neq 0$ the coefficient of $u^n$ in $H(u)G(u)$ is
\begin{multline*}
(t-2)^{n-1}\binom{X_{n-1}-1}{n-1}+(t-2)^{n}\binom{X_{n-1}}{n}=\\
(t-2)^{n-1}\binom{X_{n-1}-1}{n-1}\Big(1+(t-2)\frac{X_{n-1}}{n}\Big)=0.
\end{multline*}
\end{proof}

\begin{coro}
The series $G(u)$ is the unique solution of $G(0)=1$ and 
\begin{equation*}
\frac{d}{du}G(u)=t+(1-t)\frac{u}{G(u)}.
\end{equation*}
\end{coro}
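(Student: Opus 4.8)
The plan is to read off the statement from the two facts already in hand: the relation $\frac{d}{du}G(u)=t+(1-t)u\,H(u)$ recorded just before the Theorem, and the Theorem itself, which asserts that $G$ and $H$ are mutually inverse. Since the constant term of $G$ equals $1$, the series $G$ is an invertible formal power series, and "mutually inverse" means exactly that the product vanishes in all positive degrees while $G(0)H(0)=1$, i.e. $G(u)H(u)=1$, that is $H(u)=1/G(u)$. Substituting this into the derivative relation yields at once
\[\frac{d}{du}G(u)=t+(1-t)\frac{u}{G(u)},\]
while $G(0)=1$ is immediate from the definition of $G$. Hence $G$ is a solution of the stated initial-value problem.

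It then remains to establish uniqueness, which I would carry out at the level of formal power series. First I would clear the denominator, rewriting the equation as
\[G'(u)\,G(u)=t\,G(u)+(1-t)u.\]
Writing $G(u)=\sum_{n\ge0}g_n u^n$ with $g_0=1$, the coefficient of $u^n$ on the left is $\sum_{j=0}^n(j+1)g_{j+1}g_{n-j}$, whose term at $j=n$ is $(n+1)g_{n+1}g_0=(n+1)g_{n+1}$. Comparing coefficients of $u^n$ (the right-hand side contributes $t\,g_n$, together with the extra summand $1-t$ when $n=1$) gives
\[(n+1)g_{n+1}=t\,g_n+(1-t)\delta_{n,1}-\sum_{j=0}^{n-1}(j+1)g_{j+1}g_{n-j}.\]
Because the leading coefficient $n+1$ is a nonzero integer, this recursion expresses each $g_{n+1}$ in terms of $g_0,\dots,g_n$ alone, and thus determines the whole series uniquely once $g_0=1$ is prescribed.

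I do not anticipate a genuine obstacle, since both the derivation of the differential equation and the initial condition follow directly from results already proved. The one point meriting care is the passage $H=1/G$, which is legitimate precisely because the constant term $G(0)=1$ is a unit, so inversion of $G$ as a formal power series and the subsequent manipulations are valid. The mildly substantive part is uniqueness, but the coefficient recursion above settles it cleanly: the nonvanishing of the leading factor $n+1$ is exactly what makes the recursion solvable at every step.
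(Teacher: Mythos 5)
Your proof is correct and follows exactly the route the paper intends: the corollary is stated there without proof precisely because it is the immediate combination of the relation $\frac{d}{du}G(u)=t+(1-t)u\,H(u)$ with Theorem 1 (so that $H(u)=1/G(u)$, legitimate since $G(0)=1$ is a unit). Your explicit coefficient recursion for uniqueness is a detail the paper leaves implicit, and it is carried out correctly.
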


\begin{theo}
The generating function of the numbers $\genfrac{\langle}{\rangle}{0pt}{}{x}{k}$ is given by
\[\sum_{k\ge0}\genfrac{\langle}{\rangle}{0pt}{}{x}{k}u^k=(G(u))^x.\]
\end{theo}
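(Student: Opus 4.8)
The plan is to show that the power series $F_x(u):=(G(u))^x$ has coefficients satisfying exactly the inductive recurrence defining $\genfrac{\langle}{\rangle}{0pt}{}{x}{k}$, so that the two families must coincide. First I would note that, since $G(0)=1$, we may write $G(u)=\exp(\log G(u))$ with $\log G(u)$ a power series in $u$ having zero constant term; hence $F_x(u)=\exp(x\log G(u))$ is a well-defined formal power series in $u$, and its coefficient of $u^k$, which I call $c_k(x)$, is a polynomial in $x$. In particular $c_0(x)=1$ and $c_k(x)=0$ for $k<0$, matching the boundary data of the definition.

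The key step is to convert the differential equation of the Corollary into the recurrence. Differentiating gives $\frac{d}{du}F_x(u)=x\,(G(u))^{x-1}G'(u)$. Using the Corollary in the form $G'(u)=t+(1-t)u/G(u)$ we obtain $(G(u))^{x-1}G'(u)=t(G(u))^{x-1}+(1-t)u(G(u))^{x-2}$, whence
\[\frac{d}{du}F_x(u)=x\Big(tF_{x-1}(u)+(1-t)u\,F_{x-2}(u)\Big).\]
Extracting the coefficient of $u^{k-1}$ on both sides (the left side contributes $k\,c_k(x)$, the right side $x\big(t\,c_{k-1}(x-1)+(1-t)c_{k-2}(x-2)\big)$) yields
\[\frac{k}{x}\,c_k(x)=t\,c_{k-1}(x-1)+(1-t)\,c_{k-2}(x-2),\]
which is precisely the defining recurrence.

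Finally I would invoke uniqueness: solved for its top term, the recurrence reads $\genfrac{\langle}{\rangle}{0pt}{}{x}{k}=\frac{x}{k}\big(t\genfrac{\langle}{\rangle}{0pt}{}{x-1}{k-1}+(1-t)\genfrac{\langle}{\rangle}{0pt}{}{x-2}{k-2}\big)$ for $k\ge1$, expressing the term of index $k$ through terms of strictly smaller index. Together with $c_0(x)=1=\genfrac{\langle}{\rangle}{0pt}{}{x}{0}$ and the vanishing for negative indices, a straightforward induction on $k$ forces $c_k(x)=\genfrac{\langle}{\rangle}{0pt}{}{x}{k}$ for every $k$, which is the claimed identity $\sum_{k\ge0}\genfrac{\langle}{\rangle}{0pt}{}{x}{k}u^k=(G(u))^x$.

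The computations are routine; the only point requiring care is the logical structure. The recurrence is an induction on $k$ while simultaneously carrying the shifts $x\mapsto x-1,\,x-2$ in the continuous variable $x$, so one must treat the coefficients as polynomial identities in $x$ rather than argue value by value. The substantive content is entirely packaged in the Corollary's differential equation: recognizing that raising its solution to the power $x$ and differentiating reproduces the binomial recurrence is the crux, and once that is seen the remainder is bookkeeping.
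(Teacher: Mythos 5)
Your proof is correct and rests on the same key ingredient as the paper's: the differential equation $G'(u)=t+(1-t)u/G(u)$ from Corollary 1 combined with the power ansatz $(G(u))^x$. The only difference is the direction of the argument — the paper derives the differential equation for a hypothetical $F$ with $\mathcal{F}(u;x)=(F(u))^x$ from the recurrence and then identifies $F=G$ by the uniqueness in Corollary 1, while you verify directly that the coefficients of $(G(u))^x$ satisfy the defining recurrence and conclude by uniqueness of the recurrence's solution (induction on $k$), which makes explicit the uniqueness step that the paper's ansatz-style phrasing leaves implicit.
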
 
\begin{proof}
If we write
\[\mathcal{F}(u;x)=\sum_{k\ge0}\genfrac{\langle}{\rangle}{0pt}{}{x}{k}u^k,\]
the definition of $\genfrac{\langle}{\rangle}{0pt}{}{x}{k}$ yields
\[\frac{1}{x}\frac{d}{du}\mathcal{F}(u;x)=t\mathcal{F}(u;x-1)+(1-t)u\mathcal{F}(u;x-2).\]
Inspired by the $t=1$ case which is the classical binomial formula $\sum_{k\ge0}\binom{x}{k}u^k=(1+u)^x$, we may look for a generating function of the form $\mathcal{F}(u;x)=(F(u))^x$. Then we have
\[\frac{d}{du}F(u)=t+(1-t)\frac{u}{F(u)}.\]
We apply Corollary 1.
\end{proof}

\begin{coro}
For $k\ge 1$ we have 
\begin{equation*}
\genfrac{\langle}{\rangle}{0pt}{}{-1}{k}=\frac{(-1)^k}{k!}\prod_{i=1}^k (k-i+1+(t-1)i).
\end{equation*}
\end{coro}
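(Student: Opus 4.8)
The plan is to specialize the generating function of Theorem 2 to $x=-1$ and then recognize the resulting reciprocal series as $H(u)$ by means of Theorem 1; the whole statement is essentially a coefficient comparison.

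First I would observe that the evaluation at $x=-1$ is legitimate: since $G(0)=1$, the formal power series $G(u)$ is invertible, so $(G(u))^{-1}$ is a well-defined formal power series and specializing Theorem 2 at $x=-1$ is meaningful. This gives
\[\sum_{k\ge0}\genfrac{\langle}{\rangle}{0pt}{}{-1}{k}u^k=(G(u))^{-1}=\frac{1}{G(u)}.\]

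Next I would invoke Theorem 1. Its proof shows that $H(u)G(u)=1$ (the coefficient of $u^n$ in the product vanishes for $n\neq 0$, and the constant term equals $1$), so $G$ and $H$ are reciprocal power series. Hence $1/G(u)=H(u)$, and therefore
\[\sum_{k\ge0}\genfrac{\langle}{\rangle}{0pt}{}{-1}{k}u^k=H(u)=1+\sum_{k\ge1}\frac{(-u)^k}{k!}\prod_{i=1}^k (k-i+1+(t-1)i).\]

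Finally I would compare coefficients of $u^k$ on both sides. For $k\ge 1$ this yields precisely
\[\genfrac{\langle}{\rangle}{0pt}{}{-1}{k}=\frac{(-1)^k}{k!}\prod_{i=1}^k (k-i+1+(t-1)i),\]
as claimed, while the constant term reproduces the normalization $\genfrac{\langle}{\rangle}{0pt}{}{-1}{0}=1$. There is no real obstacle here: the substantive content has already been absorbed into Theorem 1, and the only point deserving an explicit word is the invertibility of $G$ as a formal power series, which is guaranteed by $G(0)=1$.
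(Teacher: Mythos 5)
Your proof is correct and follows the paper's own route: the paper's proof consists precisely of the remark that $\mathcal{F}(u;-1)=H(u)$, i.e., specializing the generating function of Theorem 2 at $x=-1$ and identifying $(G(u))^{-1}$ with $H(u)$ via Theorem 1, then comparing coefficients. You have merely spelled out the details (including the harmless point about invertibility of $G$) that the paper leaves implicit.
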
 
 \begin{proof}
Consequence of $\mathcal{F}(u;-1)=H(u)$.
\end{proof}

\begin{coro}
We have the generalized Chu-Vandermonde formula
\begin{equation*}
\sum_{i=0}^k\genfrac{\langle}{\rangle}{0pt}{}{x}{i}\genfrac{\langle}{\rangle}{0pt}{}{y}{k-i}=\genfrac{\langle}{\rangle}{0pt}{}{x+y}{k}.
\end{equation*}
\end{coro}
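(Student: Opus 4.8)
The plan is to deduce the identity directly from the generating function established in Theorem 2. The key observation is that $\mathcal{F}(u;x)=(G(u))^x$ is multiplicative in the exponent, $(G(u))^x(G(u))^y=(G(u))^{x+y}$, which is exactly the algebraic input behind the classical Chu--Vandermonde identity obtained from $(1+u)^x(1+u)^y=(1+u)^{x+y}$. So the whole argument is to extract the coefficient of $u^k$ from both sides of this product.

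First I would invoke Theorem 2 to write
\[\mathcal{F}(u;x)\,\mathcal{F}(u;y)=(G(u))^x(G(u))^y=(G(u))^{x+y}=\mathcal{F}(u;x+y).\]
Then I would expand the left-hand side as the Cauchy product of the two power series $\sum_i\genfrac{\langle}{\rangle}{0pt}{}{x}{i}u^i$ and $\sum_j\genfrac{\langle}{\rangle}{0pt}{}{y}{j}u^j$, whose coefficient of $u^k$ is precisely $\sum_{i=0}^k\genfrac{\langle}{\rangle}{0pt}{}{x}{i}\genfrac{\langle}{\rangle}{0pt}{}{y}{k-i}$. Comparing this with the coefficient of $u^k$ in $\mathcal{F}(u;x+y)$, namely $\genfrac{\langle}{\rangle}{0pt}{}{x+y}{k}$, yields the asserted formula upon equating coefficients.

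The only point requiring care --- and the closest thing to an obstacle --- is to justify the power-series identity $(G(u))^x(G(u))^y=(G(u))^{x+y}$ as an identity of formal power series whose coefficients are polynomials in $x$ and $y$. This is legitimate because $G(0)=1$, so $G(u)=1+u\,\widetilde{G}(u)$ with $\widetilde{G}$ a formal power series; hence $\log G(u)$ is a well-defined formal power series with zero constant term, and one may set $(G(u))^x=\exp\bigl(x\log G(u)\bigr)$. The additivity $\exp\bigl(x\log G(u)\bigr)\exp\bigl(y\log G(u)\bigr)=\exp\bigl((x+y)\log G(u)\bigr)$ is then immediate, and each coefficient of $u^k$ in $(G(u))^x$ is a polynomial in $x$, matching the polynomial $\genfrac{\langle}{\rangle}{0pt}{}{x}{k}$ of degree $k$. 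Since $x$ and $y$ are indeterminates, the resulting equality of polynomials is an identity, and no further computation is needed.
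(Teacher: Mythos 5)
Your proof is correct and follows exactly the paper's route: the paper proves this corollary in one line as a ``standard consequence of $\mathcal{F}(u;x+y)=\mathcal{F}(u;x)\,\mathcal{F}(u;y)$,'' which is precisely the multiplicativity-of-$(G(u))^x$ argument you carry out. Your additional care in justifying $(G(u))^x=\exp\bigl(x\log G(u)\bigr)$ as a formal power series with polynomial coefficients simply fills in the details the paper leaves implicit.
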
 
\begin{proof}
Standard consequence of $\mathcal{F}(u;x+y)=\mathcal{F}(u;x)\mathcal{F}(u;y)$.
\end{proof}

\begin{coro}
A variant of the generalized Chu-Vandermonde formula is given by
\begin{equation*}
\sum_{i=0}^k i \, \genfrac{\langle}{\rangle}{0pt}{}{x}{i}\genfrac{\langle}{\rangle}{0pt}{}{y}{k-i}=\frac{kx}{x+y} \genfrac{\langle}{\rangle}{0pt}{}{x+y}{k}.
\end{equation*}
\end{coro}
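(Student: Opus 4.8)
The plan is to extract both sides as coefficients of a single power series, exactly as in Corollary~3 but now with the $i$-th term weighted by $i$. The tool for this is the Euler operator $\theta=u\,\frac{d}{du}$, which acts on $\mathcal{F}(u;x)=\sum_{i\ge0}\genfrac{\langle}{\rangle}{0pt}{}{x}{i}u^i$ by $\theta\mathcal{F}(u;x)=\sum_{i\ge0}i\,\genfrac{\langle}{\rangle}{0pt}{}{x}{i}u^i$. By the Cauchy product rule, the left-hand side of the asserted identity is then the coefficient of $u^k$ in $\bigl(\theta\mathcal{F}(u;x)\bigr)\,\mathcal{F}(u;y)$, in direct parallel with the proof of the plain Chu--Vandermonde formula.

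Next I would invoke Theorem~2, which gives $\mathcal{F}(u;x)=(G(u))^x$. Differentiating, $\theta\mathcal{F}(u;x)=x\,u\,G'(u)\,(G(u))^{x-1}$, so that $\bigl(\theta\mathcal{F}(u;x)\bigr)\,\mathcal{F}(u;y)=x\,u\,G'(u)\,(G(u))^{x+y-1}$. The crucial point is that, once the two factors are combined via $(G(u))^{x-1}(G(u))^{y}=(G(u))^{x+y-1}$, the dependence on $x$ and $y$ collapses to the single exponent $x+y$, and only the prefactor $x$ retains the asymmetry introduced by the weight $i$.

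Finally I would compare with the right-hand side. Applying the same operator to $\mathcal{F}(u;x+y)=(G(u))^{x+y}$ yields $\theta\mathcal{F}(u;x+y)=(x+y)\,u\,G'(u)\,(G(u))^{x+y-1}$, whose coefficient of $u^k$ is $k\,\genfrac{\langle}{\rangle}{0pt}{}{x+y}{k}$. Hence $x\,u\,G'(u)\,(G(u))^{x+y-1}=\frac{x}{x+y}\,\theta\mathcal{F}(u;x+y)$, and extracting the coefficient of $u^k$ produces exactly $\frac{kx}{x+y}\genfrac{\langle}{\rangle}{0pt}{}{x+y}{k}$, as claimed.

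I do not expect a genuine obstacle: the argument is purely formal manipulation of the generating function supplied by Theorem~2, together with the Cauchy product. The one place that demands a little care is the origin of the factor $1/(x+y)$, which is simply the reciprocal of the exponent brought down by $\theta$ when it acts on $(G(u))^{x+y}$; the fact that the left-hand weight $x$ and the right-hand weight $x+y$ are the only quantities distinguishing the two expressions is precisely what forces the ratio $x/(x+y)$.
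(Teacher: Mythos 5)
Your proof is correct, but it takes a genuinely different route from the paper's. The paper never differentiates the generating function in this corollary: it applies the defining recurrence to the weighted factor, writing $i\,\genfrac{\langle}{\rangle}{0pt}{}{x}{i}=x\bigl(t\genfrac{\langle}{\rangle}{0pt}{}{x-1}{i-1}+(1-t)\genfrac{\langle}{\rangle}{0pt}{}{x-2}{i-2}\bigr)$, sums each of the two resulting convolutions by Corollary 3 to obtain $xt\genfrac{\langle}{\rangle}{0pt}{}{x+y-1}{k-1}+x(1-t)\genfrac{\langle}{\rangle}{0pt}{}{x+y-2}{k-2}$, and then recognizes this, via the recurrence read backwards at argument $x+y$, as $\frac{kx}{x+y}\genfrac{\langle}{\rangle}{0pt}{}{x+y}{k}$. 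Your argument instead works entirely at the level of Theorem 2: with $\mathcal{F}(u;x)=(G(u))^x$ and $\theta=u\,\frac{d}{du}$, the chain rule gives $\theta\bigl((G(u))^x\bigr)\,(G(u))^y=\frac{x}{x+y}\,\theta\bigl((G(u))^{x+y}\bigr)$, and comparing coefficients of $u^k$ finishes the proof. The paper's route is the more elementary one given what has already been established (only the recurrence and Corollary 3, no differentiation of series with indeterminate exponents), while yours isolates what the identity really depends on: it holds for \emph{any} coefficients of binomial type, i.e.\ any family whose generating function is the $x$-th power of a fixed series with constant term $1$, independently of the particular recurrence defining $\genfrac{\langle}{\rangle}{0pt}{}{x}{k}$. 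The one point worth a word in your write-up is the division by $x+y$: this is harmless because one works with rational functions in the indeterminates $x,y$, and in fact $\frac{k}{x+y}\genfrac{\langle}{\rangle}{0pt}{}{x+y}{k}$ is a polynomial, as the recurrence shows.
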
 
\begin{proof}
By definition the left-hand side is given by
\[
x \sum_{i=0}^k  \genfrac{\langle}{\rangle}{0pt}{}{y}{k-i}\Big(t\genfrac{\langle}{\rangle}{0pt}{}{x-1}{i-1}+(1-t)\genfrac{\langle}{\rangle}{0pt}{}{x-2}{i-2}\Big).\]
By the generalized Chu-Vandermonde formula, it can be written as
\[
xt\genfrac{\langle}{\rangle}{0pt}{}{x+y-1}{k-1}+x(1-t)\genfrac{\langle}{\rangle}{0pt}{}{x+y-2}{k-2}=x\frac{k}{x+y}\genfrac{\langle}{\rangle}{0pt}{}{x+y}{k}.
\]
\end{proof}

\noindent\textit{Remark. }When $m\neq 0,1$ we do not know any such simple expression for
\begin{equation*}
\sum_{i=0}^k \binom{i}{m} \, \genfrac{\langle}{\rangle}{0pt}{}{x}{i}\genfrac{\langle}{\rangle}{0pt}{}{y}{k-i}.
\end{equation*}

\begin{coro}
For $n\ge 2$ we have
\[\sum_{i=0}^n i \genfrac{\langle}{\rangle}{0pt}{}{n}{n-i}\genfrac{\langle}{\rangle}{0pt}{}{-1}{i-1}=0.\]
\end{coro}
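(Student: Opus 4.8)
The plan is to reduce the sum to the two Chu--Vandermonde identities already established, namely Corollary 3 and Corollary 4, both evaluated at $x=-1$ and $y=n$. The guiding observation is that in each summand the two lower indices $n-i$ and $i-1$ add up to the constant $n-1$, so the whole expression is a Chu--Vandermonde-type convolution of total weight $k=n-1$. The only feature that prevents a direct appeal to a single identity is the extra linear factor $i$, which I would handle by writing $i=(i-1)+1$ so that one piece matches the weighted convolution of Corollary 4 and the other matches the plain convolution of Corollary 3.

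First I would discard the $i=0$ term, which vanishes both because of the prefactor $i$ and because $\genfrac{\langle}{\rangle}{0pt}{}{-1}{-1}=0$, and then substitute $j=i-1$. This rewrites the sum as
\[\sum_{j=0}^{n-1}(j+1)\,\genfrac{\langle}{\rangle}{0pt}{}{-1}{j}\genfrac{\langle}{\rangle}{0pt}{}{n}{(n-1)-j},\]
which is now in the exact shape required by Corollaries 3 and 4 with $x=-1$, $y=n$, $k=n-1$, the weight sitting on the factor indexed by $j$. Splitting $(j+1)$ into $j$ and $1$, the $j$-part is evaluated by Corollary 4 and equals $\frac{(n-1)(-1)}{n-1}\genfrac{\langle}{\rangle}{0pt}{}{n-1}{n-1}=-\genfrac{\langle}{\rangle}{0pt}{}{n-1}{n-1}$, while the constant part is evaluated by Corollary 3 and equals $\genfrac{\langle}{\rangle}{0pt}{}{n-1}{n-1}$. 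The two contributions cancel, giving $0$.

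The point to watch---and the reason the hypothesis $n\ge 2$ is needed---is that Corollary 4 carries a factor $1/(x+y)$; here $x+y=n-1$, which must be nonzero, so the argument breaks down precisely at $n=1$. I regard recognizing the reindexing and the split $i=(i-1)+1$ as the only real step; once those are in place the rest is mechanical. It is worth noting that I never need the explicit value of $\genfrac{\langle}{\rangle}{0pt}{}{n-1}{n-1}$, since it appears as a common factor in both evaluations and cancels identically, so no further computation is required.
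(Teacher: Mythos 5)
Your proof is correct and is essentially identical to the paper's: the paper also reindexes the sum as $\sum_{i=0}^{n-1}(i+1)\genfrac{\langle}{\rangle}{0pt}{}{n}{n-i-1}\genfrac{\langle}{\rangle}{0pt}{}{-1}{i}$ and then applies Corollaries 3 and 4 with $x=-1$, $y=n$, $k=n-1$, the cancellation being exactly the observation that $kx/(x+y)+1=0$. Your additional remark explaining why $n\ge 2$ is needed (so that $x+y=n-1\neq 0$ in Corollary 4) is a nice touch that the paper leaves implicit.
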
 
\begin{proof}
Since we have
\[\sum_{i=0}^n i \genfrac{\langle}{\rangle}{0pt}{}{n}{n-i}\genfrac{\langle}{\rangle}{0pt}{}{-1}{i-1}=\sum_{i=0}^{n-1} (i+1) \genfrac{\langle}{\rangle}{0pt}{}{n}{n-i-1}\genfrac{\langle}{\rangle}{0pt}{}{-1}{i},\]
the property follows from Corollaries 3 and 4, written with $x=-1$, $y=n$, $k=n-1$. Actually in that case $kx/(x+y)+1=0$.
\end{proof}

Looking for the contributions to $u^m$ in $(G(u))^{x-1}\, G(u)$ we have the following generalization of Pascal's recurrence formula
\[
\genfrac{\langle}{\rangle}{0pt}{}{x}{m}-
\genfrac{\langle}{\rangle}{0pt}{}{x-1}{m}=t\genfrac{\langle}{\rangle}{0pt}{}{x-1}{m-1}+(1-t)\sum_{k=0}^{m-2}\frac{(-1)^{k}}{(k+2)!}\genfrac{\langle}{\rangle}{0pt}{}{x-1}{m-k-2}\prod_{i=0}^k (k-i+1+(t-1)i).
\]
Similarly with $(G(u))^{x+1}\, H(u)$ we get
\[\genfrac{\langle}{\rangle}{0pt}{}{x}{m}-
\genfrac{\langle}{\rangle}{0pt}{}{x+1}{m}=\sum_{k=1}^m\frac{(-1)^k}{k!}\genfrac{\langle}{\rangle}{0pt}{}{x+1}{m-k}\prod_{i=1}^k (k-i+1+(t-1)i).
\]

\section{New properties}

Let $n$ be a positive integer. The previous properties of $\genfrac{\langle}{\rangle}{0pt}{}{n}{k}$ are very similar to those of the classical binomial coefficient $\binom{n}{k}$. However some big differences must be emphasized.

Firstly $\genfrac{\langle}{\rangle}{0pt}{}{n}{k}$ and $\genfrac{\langle}{\rangle}{0pt}{}{n}{n-k}$ are not equal. In particular $\genfrac{\langle}{\rangle}{0pt}{}{n}{n} \neq 1$. By definition we have
\[\genfrac{\langle}{\rangle}{0pt}{}{n}{n}=
t\genfrac{\langle}{\rangle}{0pt}{}{n-1}{n-1}
+(1-t)\genfrac{\langle}{\rangle}{0pt}{}{n-2}{n-2}.\]
which implies by induction
\[\genfrac{\langle}{\rangle}{0pt}{}{n}{n}=1+(t-1)\genfrac{\langle}{\rangle}{0pt}{}{n-1}{n-1},\]
and
\[\genfrac{\langle}{\rangle}{0pt}{}{n}{n}=\frac{1-(t-1)^{n+1}}{2-t}.\]

Secondly $\genfrac{\langle}{\rangle}{0pt}{}{n}{k}$ is not zero for $k>n$, but divisible by $(1-t)$. Starting from the definition we have for $k\ge 2$,
\begin{align*}
k\genfrac{\langle}{\rangle}{0pt}{}{1}{k}&=(1-t)\genfrac{\langle}{\rangle}{0pt}{}{-1}{k-2}\\
&=(1-t)\frac{(-1)^k}{(k-2)!}\prod_{i=1}^{k-2} (k-i-1+(t-1)i).
\end{align*}
By induction we get
\begin{align*}
\binom{k}{2}\genfrac{\langle}{\rangle}{0pt}{}{2}{k}&=t(1-t)\genfrac{\langle}{\rangle}{0pt}{}{-1}{k-3}, \quad k\ge 3,\\
\binom{k}{3}\genfrac{\langle}{\rangle}{0pt}{}{3}{k}&=(1-t)\big(t^2+\frac{k-1}{2}(1-t)\big)\genfrac{\langle}{\rangle}{0pt}{}{-1}{k-4}, \quad k\ge 4,\\
\binom{k}{4}\genfrac{\langle}{\rangle}{0pt}{}{4}{k}&=t(1-t)\big(t^2+\frac{5k-8}{6}(1-t)\big)\genfrac{\langle}{\rangle}{0pt}{}{-1}{k-5}, \quad k\ge 5.
\end{align*}

More generally for $k>n$ the definition yields
\begin{equation*}
\binom{k}{n}\genfrac{\langle}{\rangle}{0pt}{}{n}{k}=(1-t)f_{n,k}\genfrac{\langle}{\rangle}{0pt}{}{-1}{k-n-1},
\end{equation*}
where $f_{n,k}$ is a monic polynomial in $t$, inductively defined by $f_{1,k}=1$, $f_{2,k}=t$ and
\[f_{n,k}=tf_{n-1,k-1}+(1-t)\frac{k-1}{n-1}f_{n-2,k-2}.\]

The coefficients $\genfrac{\langle}{\rangle}{0pt}{}{n}{k}$ with $k>n$ may be written in terms of coefficients $\genfrac{\langle}{\rangle}{0pt}{}{n}{k}$ with $k\le n$. The simplest case is given below.
\begin{prop}
We have
\begin{equation*}
\frac{1}{1-t} \genfrac{\langle}{\rangle}{0pt}{}{n}{n+1}=\sum_{i=0}^n \frac{i}{i+1} \genfrac{\langle}{\rangle}{0pt}{}{n}{n-i}\genfrac{\langle}{\rangle}{0pt}{}{-1}{i-1}.
\end{equation*}
\end{prop}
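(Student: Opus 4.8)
The plan is to prove the identity by the generating function method, working with formal power series in $u$. By Theorem 2 we have $\sum_{k\ge0}\genfrac{\langle}{\rangle}{0pt}{}{n}{k}u^k=(G(u))^n$, while Corollary 2 (together with Theorem 1) gives $\sum_{m\ge0}\genfrac{\langle}{\rangle}{0pt}{}{-1}{m}u^m=H(u)=1/G(u)$. The right-hand side $S$ is a Cauchy product of total degree $n-1$: taking $m=i-1$ as the index of the second factor, the index of the first factor is $(n-1)-m$ and the weight becomes $i/(i+1)=(m+1)/(m+2)$. Hence $S=[u^{n-1}]\big((G(u))^n\,\widetilde H(u)\big)$, where $\widetilde H(u)=\sum_{m\ge0}\frac{m+1}{m+2}\genfrac{\langle}{\rangle}{0pt}{}{-1}{m}u^m$. (The $i=0$ term carries the factor $0$, so the reindexing is harmless.)

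The main point is to evaluate $\widetilde H$ in closed form in terms of $G$. First I would write $\frac{m+1}{m+2}=1-\frac{1}{m+2}$ and observe that $\sum_{m\ge0}\frac{1}{m+2}\genfrac{\langle}{\rangle}{0pt}{}{-1}{m}u^m=u^{-2}\int_0^u vH(v)\,dv$, which is a legitimate formal power series. The relation $\frac{d}{du}G(u)=t+(1-t)uH(u)$ recorded before Theorem 1 gives $vH(v)=(G'(v)-t)/(1-t)$, so that $\int_0^u vH(v)\,dv=\frac{1}{1-t}\big(G(u)-1-tu\big)$ after using $G(0)=1$. Combining this with $(1-t)H(u)=(G'(u)-t)/u$ yields the clean expression
\[(1-t)\,\widetilde H(u)=\frac{G'(u)-t}{u}-\frac{G(u)-1-tu}{u^2}.\]

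Finally I would extract the coefficient. Multiplying by $(G(u))^n$ and reading off $[u^{n-1}]$, the first term contributes $[u^n]\big((G(u))^nG'(u)\big)-t\,[u^n](G(u))^n$; writing $(G(u))^nG'(u)=\frac{1}{n+1}\frac{d}{du}(G(u))^{n+1}$ turns the leading piece into $[u^{n+1}](G(u))^{n+1}=\genfrac{\langle}{\rangle}{0pt}{}{n+1}{n+1}$, so this term equals $\genfrac{\langle}{\rangle}{0pt}{}{n+1}{n+1}-t\genfrac{\langle}{\rangle}{0pt}{}{n}{n}$. The second term contributes $-[u^{n+1}]\big((G(u))^{n+1}-(G(u))^n-tu(G(u))^n\big)=-\genfrac{\langle}{\rangle}{0pt}{}{n+1}{n+1}+\genfrac{\langle}{\rangle}{0pt}{}{n}{n+1}+t\genfrac{\langle}{\rangle}{0pt}{}{n}{n}$. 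Adding the two, the $\genfrac{\langle}{\rangle}{0pt}{}{n+1}{n+1}$ and $t\genfrac{\langle}{\rangle}{0pt}{}{n}{n}$ terms cancel and only $\genfrac{\langle}{\rangle}{0pt}{}{n}{n+1}$ survives, giving $(1-t)\,S=\genfrac{\langle}{\rangle}{0pt}{}{n}{n+1}$, which is the assertion.

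The step I expect to be the main obstacle is the passage through $\widetilde H$: recognizing the weight $1/(i+1)$ as a formal integration of $H$, and then re-expressing that integral purely in terms of $G$ via $G'(u)=t+(1-t)uH(u)$, so that after multiplying by $(G(u))^n$ the coefficient extraction telescopes. Keeping the index shift $m=i-1$ and the two $u^{-1}$, $u^{-2}$ factors consistent is the delicate bookkeeping; the remaining coefficient extractions are routine.
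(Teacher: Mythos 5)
Your proof is correct, and it takes a genuinely different route from the paper's. The paper argues by induction on $n$: via the Section~3 relation $\binom{k}{n}\genfrac{\langle}{\rangle}{0pt}{}{n}{k}=(1-t)f_{n,k}\genfrac{\langle}{\rangle}{0pt}{}{-1}{k-n-1}$ specialized at $k=n+1$, the claim becomes $h_n=f_{n,n+1}/(n+1)$, where $h_n$ denotes the right-hand side; since $f_{n,n+1}=tf_{n-1,n}+(1-t)\tfrac{n}{n-1}f_{n-2,n-1}$, it suffices to verify the recurrence $\tfrac{n+1}{n}h_n=th_{n-1}+(1-t)h_{n-2}$, which the paper does by inserting the defining recurrence of $\genfrac{\langle}{\rangle}{0pt}{}{n}{n-i}$ and invoking Corollary~5. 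You bypass all of that: you realize the sum as $[u^{n-1}]\big((G(u))^n\widetilde H(u)\big)$ with $\widetilde H(u)=\sum_{m\ge0}\tfrac{m+1}{m+2}\genfrac{\langle}{\rangle}{0pt}{}{-1}{m}u^m$, evaluate $\widetilde H$ in closed form purely in terms of $G$ via the formal integration $\int_0^u vH(v)\,dv=\tfrac{1}{1-t}(G(u)-1-tu)$, and let the extraction telescope. I checked the details: the reindexing $m=i-1$ is harmless because the $i=0$ term vanishes; the divisions by $u$ and $u^2$ are legitimate since $G'(u)-t$ and $G(u)-1-tu$ vanish to the required orders at $u=0$; and the cancellation of $\genfrac{\langle}{\rangle}{0pt}{}{n+1}{n+1}$ and $t\genfrac{\langle}{\rangle}{0pt}{}{n}{n}$ between the two contributions is exactly right. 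As for what each approach buys: the paper's proof is very short given the $f_{n,k}$ machinery (reused later in Theorem~3) and Corollary~5, but it presupposes that machinery and leaves the weight $i/(i+1)$ unexplained; your proof needs only Theorems~1--2, Corollary~2 and the relation $G'(u)=t+(1-t)uH(u)$, avoids induction (and hence any check of initial cases), and explains the weight structurally as a formal integration of $H$ --- a technique flexible enough to handle other rational weights of this kind, which is interesting in light of the paper's remark that sums with weights $\binom{i}{m}$, $m\neq 0,1$, have no known simple expression.
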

\begin{proof}
Denoting the right-hand side by $h_n$, we must prove that $h_n=f_{n,n+1}/(n+1)$. Equivalently that $h_n$ is inductively defined by 
\[\frac{n+1}{n} h_n=th_{n-1}+(1-t)h_{n-2}.\]
In other words, that we have
\[\sum_{i=0}^n \frac{i}{i+1}\left(\frac{n+1}{n}\frac{n-i}{n-i}\genfrac{\langle}{\rangle}{0pt}{}{n}{n-i}-t\genfrac{\langle}{\rangle}{0pt}{}{n-1}{n-i-1}-(1-t)\genfrac{\langle}{\rangle}{0pt}{}{n-2}{n-i-2}\right)\genfrac{\langle}{\rangle}{0pt}{}{-1}{i-1}=0.\]
By the definition this may be rewritten as
\[\sum_{i=0}^n \frac{i}{i+1}\Big(\frac{n+1}{n-i}-1\Big)\frac{n-i}{n}\genfrac{\langle}{\rangle}{0pt}{}{n}{n-i}\genfrac{\langle}{\rangle}{0pt}{}{-1}{i-1}=0.\]
We apply Corollary 5.
\end{proof}

\section{Binomial expansion}

In this section we consider the expansion
\[\genfrac{\langle}{\rangle}{0pt}{}{x}{k}=t^k\binom{x}{k}+\sum_{i=1}^{k-1} c_i(k)\binom{x}{i}.\]
We give two methods for the evaluation of the coefficients $c_i(k)$, $1 \le i\le k-1$.

\subsection{First method}

The definition of $\genfrac{\langle}{\rangle}{0pt}{}{x}{k}$ may be written as
\[\sum_{i=1}^k \frac{k}{i}c_i(k) \binom{x-1}{i-1}=
t\sum_{i=1}^{k-1} c_i(k-1) \binom{x-1}{i}
+(1-t)\sum_{i=1}^{k-2} c_i(k-2)\binom{x-2}{i}.\]
Using the classical identity
\[\binom{x}{i}=\sum_{m=0}^i (-1)^m \binom{x+1}{i-m},\]
and identifying the coefficients of $\binom{x-1}{i-1}$ on both sides, we obtain 
\[\frac{k}{i}\,c_i(k)=tc_{i-1}(k-1)+(1-t)\sum_{m=0}^{k-i-1} (-1)^m\
c_{i+m-1}(k-2).\]
This relation may be used to get $c_i(k)$ inductively.

For $k\ge6$ this recurrence yields
\begin{align*}
\genfrac{\langle}{\rangle}{0pt}{}{x}{k}
&=t^k\, \binom{x}{k}-\frac{k-1}{2}t^{k-2}(t-1)\,\binom{x}{k-1}\\
&+\frac{k-2}{3}t^{k-4}(t-1)\Big(t^2+\frac{3(k-3)}{8}(t-1)\Big)\,\binom{x}{k-2}\\
&-\frac{k-3}{4}t^{k-6}(t-1)\Big(t^4+\frac{4k-13}{6}t^2(t-1)+\frac{1}{6}\binom{k-4}{2}(t-1)^2\Big)\,\binom{x}{k-3}\\
&+\frac{k-4}{5}t^{k-8}(t-1)\Big(t^6+\frac{65k-229}{72}t^4(t-1)\\
&\hspace{2 cm}+\frac{5(2k-9)}{48}(k-5)t^2(t-1)^2+\frac{5}{64}\binom{k-5}{3}(t-1)^3\Big)\,\binom{x}{k-4}\\
&-\frac{k-5}{6}t^{k-10}(t-1)\Big(t^8+\frac{66k-251}{60}t^6(t-1)
+\frac{85k^2-853k+2148}{240}t^4(t-1)^2\\
&\hspace{2 cm}+\frac{4k-23}{48}\binom{k-6}{2}t^2(t-1)^3+\frac{3}{80}\binom{k-6}{4}(t-1)^4\Big)\,\binom{x}{k-5}\\
&+\ldots.
\end{align*}
It appears empirically that $c_{k-i}(k)$, $i\neq 0$, has the form
\[c_{k-i}(k)=(-1)^{i}\frac{k-i}{i+1} t^{k-2i}(t-1)\sum_{m=0}^{i-1} a_m (k,i)\,t^{2(i-m-1)}(t-1)^m,\]
with $a_m (k,i)$ a polynomial in $k$ with degree $m$. 

\subsection{Second method}

A much better expression of $c_i(k)$, $1 \le i\le k-1$, may be obtained by using the relation
\begin{equation}
\frac{1}{1-t}\binom{k}{n}\genfrac{\langle}{\rangle}{0pt}{}{n}{k}=f_{n,k}\genfrac{\langle}{\rangle}{0pt}{}{-1}{k-n-1},\qquad k\ge n+1.
\end{equation}
For instance $c_1(k)$ is given by 
\[\genfrac{\langle}{\rangle}{0pt}{}{1}{k}=c_1(k),\]
hence
\[\frac{1}{1-t}k\,c_1(k)=\frac{1}{1-t}k\genfrac{\langle}{\rangle}{0pt}{}{1}{k}
=\genfrac{\langle}{\rangle}{0pt}{}{-1}{k-2}.\]
Similarly we have 
\[\genfrac{\langle}{\rangle}{0pt}{}{2}{k}=c_2(k)+2c_1(k),\]
which yields
\[\frac{1}{1-t}\binom{k}{2}c_2(k)=t\genfrac{\langle}{\rangle}{0pt}{}{-1}{k-3}-(k-1)\genfrac{\langle}{\rangle}{0pt}{}{-1}{k-2}.\]

This property is generalized by the following explicit formula.
\begin{theo}
Let $f_{n,k}$ be the polynomial in $t$ and $k$ inductively defined by $f_{1,k}=1$, $f_{2,k}=t$ and
\[f_{n,k}=tf_{n-1,k-1}+(1-t)\frac{k-1}{n-1}f_{n-2,k-2}.\]
We have
\[\genfrac{\langle}{\rangle}{0pt}{}{x}{k}=t^k\binom{x}{k}+\sum_{i=1}^{k-1} c_i(k)\binom{x}{i},\]
with $c_i(k)$ given by
\[\frac{1}{1-t}\binom{k}{i}c_i(k)=\sum_{m=1}^{i} (-1)^{i-m}f_{m,k} \binom{k-m}{i-m}\genfrac{\langle}{\rangle}{0pt}{}{-1}{k-m-1}.\]
\end{theo}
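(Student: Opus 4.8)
The plan is to recover each coefficient $c_i(k)$ by finite differences in $x$ and then to rewrite the resulting values $\genfrac{\langle}{\rangle}{0pt}{}{n}{k}$ by means of relation (4.1). Since the polynomials $\binom{x}{i}$, $i\ge0$, form a basis of the space of polynomials in $x$ and satisfy $\Delta\binom{x}{i}=\binom{x}{i-1}$ for the forward difference $\Delta$, the coefficient of $\binom{x}{i}$ in the expansion equals the $i$-th forward difference evaluated at $0$. Applying this to $\genfrac{\langle}{\rangle}{0pt}{}{x}{k}$ gives
\[c_i(k)=\sum_{n=0}^{i}(-1)^{i-n}\binom{i}{n}\,\genfrac{\langle}{\rangle}{0pt}{}{n}{k},\qquad 1\le i\le k-1,\]
where the contribution of the $t^k\binom{x}{k}$ term has been discarded because its $i$-th difference at $0$ is $t^k\binom{0}{k-i}=0$ for $i<k$.

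First I would simplify this sum. For $1\le i\le k-1$ every index satisfies $n\le i<k$, so the $n=0$ term vanishes (as $\genfrac{\langle}{\rangle}{0pt}{}{0}{k}=0$ for $k\ge1$), leaving a sum from $n=1$ to $i$. Next I would invoke relation (4.1): the same inequality $n\le i\le k-1$ guarantees $k\ge n+1$, so (4.1) applies to each surviving term and gives $\genfrac{\langle}{\rangle}{0pt}{}{n}{k}=(1-t)f_{n,k}\genfrac{\langle}{\rangle}{0pt}{}{-1}{k-n-1}\big/\binom{k}{n}$. Substituting and multiplying through by $\binom{k}{i}$ yields
\[\frac{1}{1-t}\binom{k}{i}c_i(k)=\sum_{n=1}^{i}(-1)^{i-n}\binom{k}{i}\frac{\binom{i}{n}}{\binom{k}{n}}\,f_{n,k}\,\genfrac{\langle}{\rangle}{0pt}{}{-1}{k-n-1}.\]
The final step is the trinomial-revision identity $\binom{k}{i}\binom{i}{n}=\binom{k}{n}\binom{k-n}{i-n}$, which collapses the binomial prefactor to $\binom{k-n}{i-n}$ and, after renaming $n$ as $m$, reproduces the claimed formula exactly.

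The computation is essentially mechanical once the inversion is set up, since the genuine analytic content has already been isolated in relation (4.1), which I am allowed to assume. The only points requiring care are the vanishing of the two boundary contributions (the $n=0$ term and the $t^k\binom{x}{k}$ term) and the verification that the hypothesis $k\ge n+1$ of (4.1) holds throughout the range $1\le n\le i\le k-1$; consequently I expect no real obstacle beyond cleanly justifying the finite-difference inversion and recognizing the subset-of-a-subset identity.
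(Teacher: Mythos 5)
Your proposal is correct and follows essentially the same route as the paper: the paper also recovers $c_i(k)$ by binomial inversion from the values $\genfrac{\langle}{\rangle}{0pt}{}{m}{k}$, $m\le i\le k-1$ (your forward-difference formulation is the same inversion, with the leading term and the $n=0$ term vanishing for the same reasons), then applies relation (4.1) together with the trinomial-revision identity $\binom{k}{i}\binom{i}{m}=\binom{k}{m}\binom{k-m}{i-m}$. The only cosmetic difference is the order in which (4.1) and the revision identity are applied.
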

\begin{proof}
From
\[\genfrac{\langle}{\rangle}{0pt}{}{i}{k}=\sum_{m=1}^i c_m(k)
\binom{i}{m}, \quad i\le k-1,\]
we deduce by inversion
\begin{equation*}
c_i(k)=\sum_{m=1}^i 
(-1)^{i-m} \binom{i}{m}\genfrac{\langle}{\rangle}{0pt}{}{m}{k},\quad i\le k-1,
\end{equation*}
which is a direct consequence of the classical identity
\[\sum_{m=p}^i(-1)^{i-m}\binom{i}{m}\binom{m}{p}=\delta_{ip}.\]
Therefore we have
\begin{align*}
\frac{1}{1-t}\binom{k}{i}c_i(k)&=\sum_{m=1}^i 
(-1)^{i-m} \frac{1}{1-t} \binom{k}{i}\binom{i}{m}\genfrac{\langle}{\rangle}{0pt}{}{m}{k}\\
&=\sum_{m=1}^i 
(-1)^{i-m} \binom{k-m}{i-m}\frac{1}{1-t} \binom{k}{m}\genfrac{\langle}{\rangle}{0pt}{}{m}{k}.
\end{align*}
We apply (4.1).
\end{proof}

The first values are given by
\begin{align*}
\frac{1}{1-t}k\,c_1(k)&=\genfrac{\langle}{\rangle}{0pt}{}{-1}{k-2},\\
\frac{1}{1-t}\binom{k}{2}c_2(k)&=t\genfrac{\langle}{\rangle}{0pt}{}{-1}{k-3}-(k-1)\genfrac{\langle}{\rangle}{0pt}{}{-1}{k-2},\\
\frac{1}{1-t}\binom{k}{3}c_3(k)&=\big(t^2+\frac{k-1}{2}(1-t)\big)\genfrac{\langle}{\rangle}{0pt}{}{-1}{k-4}-t(k-2)\genfrac{\langle}{\rangle}{0pt}{}{-1}{k-3}+\binom{k-1}{2}\genfrac{\langle}{\rangle}{0pt}{}{-1}{k-2},\\
\frac{1}{1-t}\binom{k}{4}c_4(k)&=t\big(t^2+\frac{5k-8}{6}(1-t)\big)\genfrac{\langle}{\rangle}{0pt}{}{-1}{k-5}-(k-3)\big(t^2+\frac{k-1}{2}(1-t)\big)\genfrac{\langle}{\rangle}{0pt}{}{-1}{k-4}\\
&+t\binom{k-2}{2}\genfrac{\langle}{\rangle}{0pt}{}{-1}{k-3}-\binom{k-1}{3}\genfrac{\langle}{\rangle}{0pt}{}{-1}{k-2}.
\end{align*}

\section{An application}

A partition $\la= (\la_1,...,\la_r)$
is a finite weakly decreasing
sequence of nonnegative integers, called parts. The number
$l(\la)$ of positive parts is called the length of
$\la$, and $|\la| = \sum_{i = 1}^{r} \la_i$
the weight of $\la$.

For any partition $\la$ and any integer $1 \le i \le l(\la)+1$, we denote by $\la^{(i)}$ the partition $\mu$ (if it exists) such that $\mu_j=\la_j$ for $j\neq i$ and $\mu_i=\la_i +1$. Similarly for any integer $1 \le i \le l(\la)$, we denote by $\la_{(i)}$ the partition $\nu$ (if it exists) such that $\nu_j=\la_j$ for $j\neq i$ and $\nu_i=\la_i -1$. 

In the study of the symmetric groups~\cite{La}, the following differential system is encountered. To any partition $\rho$ we associate a function $\psi_\rho(u)$ with the conditions
\begin{align*}
\sum_{i=1}^{l(\rho)+1}\frac{d}{du} \,\psi_{\rho^{(i)}}(u)&=
\sum_{i=1}^{l(\rho)+1}(\rho_i-i+1) \,\psi_{\rho^{(i)}}(u),\\
\sum_{i=1}^{l(\rho)+1}(\rho_i-i+1)\frac{d}{du} \,\psi_{\rho^{(i)}}(u)&=
\sum_{i=1}^{l(\rho)+1}(\rho_i-i+1)^2 \,\psi_{\rho^{(i)}}(u)+t|\rho|\psi_\rho(u)-(1-t)\sum_{i=1}^{l(\rho)}
\psi_{\rho_{(i)}}(u).
\end{align*}
This first order (overdetermined) differential system must be solved with the initial conditions $\psi_{\rho}(0)=0$.

In this section we shall only consider the elementary case where $\rho=(r, 1^s)$ is a hook partition. In this situation the differential system becomes
\begin{align}
\frac{d}{du} \Big(\psi_{r+1,1^s}(u)+\psi_{r,2,1^{s-1}}(u)+\psi_{r,1^{s+1}}(u)\Big)&=
r\psi_{r+1,1^s}(u)-(s+1)\psi_{r,1^{s+1}}(u),\\\notag
\frac{d}{du} \Big (r\psi_{r+1,1^s}(u)-(s+1)\psi_{r,1^{s+1}}(u)\Big)&=
r^2\psi_{r+1,1^s}(u)+(s+1)^2\psi_{r,1^{s+1}}(u)\\+t(r+s)\psi_{r,1^s}(u)&-(1-t)\Big(\psi_{r-1,1^s}(u)+\psi_{r,1^{s-1}}(u)\Big).
\end{align}

The reader may check that for $t=1$ the solutions $\psi_{r,1^s}(u)$ are given by 
\begin{align*}
(r+s)!\,\psi_{r,1^s}(u)&=(e^u-1)^{r-1}\, (e^{-u}-1)^s\\
&=\sum_{i=-s}^{r-1}(-1)^{r+s+i-1}\binom{r+s-1}{r-i-1}e^{iu}.
\end{align*}
For $t$ arbitrary, the following partial results give an idea about the high complexity of this problem.

Let us restrict to the most elementary situation $s=0$. By linear combination, the differential system (5.1)-(5.2) is easily transformed to
\begin{align}
(r+1)\,\psi^{\prime}_{r+1}&=r(r+1)\,\psi_{r+1}+
tr\psi_{r}-(1-t)\psi_{r-1},\\
(r+1)\,\psi^{\prime}_{r,1}&=-(r+1)\,\psi_{r,1}-tr\psi_{r}+(1-t)\psi_{r-1},
\end{align}
which must be solved with the initial conditions $\psi_{r}(0)=\psi_{r,1}(0)=0$.

\begin{prop}
We have
\[
(-1)^{r}r!\,\psi_{r}(u)=\sum_{i=1}^{r-1}
\genfrac{\langle}{\rangle}{0pt}{}{r-1}{r-i-1} \genfrac{\langle}{\rangle}{0pt}{}{-1}{i-1} \, e^{iu}-\genfrac{\langle}{\rangle}{0pt}{}{r-2}{r-2}.\]
\end{prop}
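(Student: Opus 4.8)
The plan is to verify that the proposed right-hand side solves the recurrence (5.3) together with the initial condition $\psi_r(0)=0$, and then invoke uniqueness. It is convenient to set $\phi_r(u)=(-1)^r r!\,\psi_r(u)$, so that the claim becomes $\phi_r(u)=\sum_{i=1}^{r-1}\genfrac{\langle}{\rangle}{0pt}{}{r-1}{r-i-1}\genfrac{\langle}{\rangle}{0pt}{}{-1}{i-1}e^{iu}-\genfrac{\langle}{\rangle}{0pt}{}{r-2}{r-2}$. Substituting $\psi_j=(-1)^j\phi_j/j!$ into (5.3) and clearing the factorials turns it into the much cleaner relation
\[\tfrac1r\,\phi'_{r+1}=\phi_{r+1}-t\phi_r-(1-t)\phi_{r-1}.\]
First I would record this reduction, since all the remaining work is the verification that the explicit $\phi$'s satisfy it.

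Next I would plug the explicit form into this relation and match coefficients. Only $\phi_{r+1}$ carries a derivative, and since the exponentials $e^{iu}$ are linearly independent I can compare the coefficient of each $e^{iu}$ and the constant term separately. Writing $k=r-i$ and cancelling the common factor $\genfrac{\langle}{\rangle}{0pt}{}{-1}{i-1}$, the coefficient of $e^{iu}$ collapses to
\[k\genfrac{\langle}{\rangle}{0pt}{}{r}{k}=rt\genfrac{\langle}{\rangle}{0pt}{}{r-1}{k-1}+r(1-t)\genfrac{\langle}{\rangle}{0pt}{}{r-2}{k-2},\]
which is exactly $r$ times the defining recurrence of Section 1 evaluated at $x=r$; the boundary cases $k=0,1$ are absorbed by the convention $\genfrac{\langle}{\rangle}{0pt}{}{x}{k}=0$ for $k<0$. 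The constant terms match if and only if $\genfrac{\langle}{\rangle}{0pt}{}{r-1}{r-1}=t\genfrac{\langle}{\rangle}{0pt}{}{r-2}{r-2}+(1-t)\genfrac{\langle}{\rangle}{0pt}{}{r-3}{r-3}$, which is precisely the diagonal recurrence established in Section 3. So both pieces follow from identities already at hand.

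For the initial condition, setting $u=0$ gives $\phi_r(0)=\sum_{i=1}^{r-1}\genfrac{\langle}{\rangle}{0pt}{}{r-1}{r-i-1}\genfrac{\langle}{\rangle}{0pt}{}{-1}{i-1}-\genfrac{\langle}{\rangle}{0pt}{}{r-2}{r-2}$; after reindexing, the sum becomes $\sum_{m=0}^{r-2}\genfrac{\langle}{\rangle}{0pt}{}{r-1}{m}\genfrac{\langle}{\rangle}{0pt}{}{-1}{r-2-m}$, which by the generalized Chu-Vandermonde formula (Corollary 3, with $x=r-1$, $y=-1$, $k=r-2$) equals $\genfrac{\langle}{\rangle}{0pt}{}{r-2}{r-2}$, so $\phi_r(0)=0$. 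Since (5.3) is a first-order linear ODE in $u$ determining $\psi_{r+1}$ from $\psi_r,\psi_{r-1}$, and the initial value pins down its unique solution, an induction on $r\ge 2$ then finishes, with the two base cases $\psi_1=0$ and $\psi_2=(e^u-1)/2$ checked directly.

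I expect the one genuinely delicate point to be the seeding of the induction rather than the inductive step. The formula fails at $r=0$: one finds $\psi_0=-1/(1-t)$, not the value $0$ that the empty sum would suggest, so the recursion must be started from the honest low-order solutions $\psi_1,\psi_2$ of the system, and I would double-check that these are consistent with the starting data of (5.3)--(5.4). The routine but error-prone part is the reindexing bookkeeping needed to recognize the $e^{iu}$-coefficient identity as the defining recurrence and to put the Chu-Vandermonde sum into the correct form.
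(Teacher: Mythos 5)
Your proposal is correct and follows essentially the same route as the paper's own proof: expand in exponentials, reduce the $e^{iu}$-coefficient identities to the defining recurrence of $\genfrac{\langle}{\rangle}{0pt}{}{x}{k}$ at $x=r$ (and the diagonal recurrence $\genfrac{\langle}{\rangle}{0pt}{}{n}{n}=t\genfrac{\langle}{\rangle}{0pt}{}{n-1}{n-1}+(1-t)\genfrac{\langle}{\rangle}{0pt}{}{n-2}{n-2}$ for the constant term), and use Corollary 3 with $k=r-2$ to deal with the initial condition $\psi_r(0)=0$. The only difference is organizational: the paper derives the coefficients $a_i^{(r)}$ inductively and uses the initial condition to pin down the otherwise undetermined top coefficient $a_{r-1}^{(r)}$, whereas you verify the full claimed formula and make the ODE-uniqueness and seeding step (from $\psi_1,\psi_2$, avoiding the spurious $\psi_0$) explicit --- the same computation read in the opposite direction.
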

\begin{proof}The statement is easily checked for $r\le 3$ since we have
\[
\psi_{1}(u)=0,\quad 2\psi_{2}(u)=e^u -1,\quad-6\psi_{3}(u)=t(-e^{2u}+2e^u -1).
\]
Inspired by the $t=1$ case, we may look for a solution of (5.3) under the form 
\[(-1)^{r}r!\,\psi_{r}(u)=\sum_{i=0}^{r-1} a_i^{(r)} e^{iu}.\]
By identification of the coefficients of exponentials, we obtain
\[\frac{r-i-1}{r-1}a_i^{(r)}=ta_i^{(r-1)}+(1-t)a_i^{(r-2)}.\]
For $1\le i\le r-2$, by induction on $r$ this relation yields
 \[a_i^{(r)}=\genfrac{\langle}{\rangle}{0pt}{}{r-1}{r-i-1} \genfrac{\langle}{\rangle}{0pt}{}{-1}{i-1}.\] 
For $i=0$ by induction on $r$ we get similarly
\[a_0^{(r)}=-\genfrac{\langle}{\rangle}{0pt}{}{r-2}{r-2}.\]
For $i=r-1$ the value of $a_{r-1}^{(r)}$ is not defined. But the lattter may be obtained from the initial condition
\[\psi_{r}(0)=\sum_{i=0}^{r-1} a_i^{(r)}=0.\]
Actually applying the generalized Chu-Vandermonde formula of Corollary 3, written with $x=-1$, $y=r-1$ and $k=r-2$, we have
\begin{align*}
-a_{r-1}^{(r)}&=\sum_{i=1}^{r-2}
\genfrac{\langle}{\rangle}{0pt}{}{r-1}{r-i-1} \genfrac{\langle}{\rangle}{0pt}{}{-1}{i-1}-\genfrac{\langle}{\rangle}{0pt}{}{r-2}{r-2}\\
&=\sum_{i=0}^{r-3}\genfrac{\langle}{\rangle}{0pt}{}{r-1}{r-i-2} \genfrac{\langle}{\rangle}{0pt}{}{-1}{i}-\sum_{i=0}^{r-2}\genfrac{\langle}{\rangle}{0pt}{}{r-1}{r-i-2} \genfrac{\langle}{\rangle}{0pt}{}{-1}{i}\\
&=-\genfrac{\langle}{\rangle}{0pt}{}{-1}{r-2}.
\end{align*} 
\end{proof}

\begin{prop}
We have
\[
(-1)^{r+1}(r+1)!\,\psi_{r,1}(u)=\sum_{i=1}^{r-1}\frac{r-i}{i+1}
\genfrac{\langle}{\rangle}{0pt}{}{r}{r-i} \genfrac{\langle}{\rangle}{0pt}{}{-1}{i-1} \,e^{iu}-r\genfrac{\langle}{\rangle}{0pt}{}{r-1}{r-1}+\frac{r+1}{1-t}\genfrac{\langle}{\rangle}{0pt}{}{r}{r+1}\,e^{-u}.\]
\end{prop}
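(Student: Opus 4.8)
The plan is to follow exactly the method of the previous proposition. Since Proposition 2 now furnishes $\psi_r$ and $\psi_{r-1}$ in closed form, equation (5.4) becomes a first order linear differential equation for $\psi_{r,1}$ whose right-hand side is a known finite combination of exponentials. I would therefore look for a solution of the form $(-1)^{r+1}(r+1)!\,\psi_{r,1}(u)=\sum_i b_i^{(r)}e^{iu}$, determine the coefficients $b_i^{(r)}$ with $i\ge 0$ by matching exponentials, and finally fix the coefficient of $e^{-u}$ by imposing the initial condition $\psi_{r,1}(0)=0$.

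Writing $(-1)^n n!\,\psi_n(u)=\sum_i a_i^{(n)}e^{iu}$ as in Proposition 2 and multiplying (5.4) by $(-1)^{r+1}r!$, I would rewrite it as
\[A_r'(u)+A_r(u)=r\sum_i\bigl(t\,a_i^{(r)}+(1-t)\,a_i^{(r-1)}\bigr)e^{iu},\qquad A_r(u):=(-1)^{r+1}(r+1)!\,\psi_{r,1}(u).\]
Matching the coefficient of $e^{iu}$ for $i\neq -1$ gives $(i+1)\,b_i^{(r)}=r\bigl(t\,a_i^{(r)}+(1-t)\,a_i^{(r-1)}\bigr)$. For $1\le i\le r-1$, I insert the values $a_i^{(r)}=\genfrac{\langle}{\rangle}{0pt}{}{r-1}{r-i-1}\genfrac{\langle}{\rangle}{0pt}{}{-1}{i-1}$ and $a_i^{(r-1)}=\genfrac{\langle}{\rangle}{0pt}{}{r-2}{r-i-2}\genfrac{\langle}{\rangle}{0pt}{}{-1}{i-1}$ from Proposition 2, and apply the defining recurrence in the form $\tfrac{r-i}{r}\genfrac{\langle}{\rangle}{0pt}{}{r}{r-i}=t\genfrac{\langle}{\rangle}{0pt}{}{r-1}{r-i-1}+(1-t)\genfrac{\langle}{\rangle}{0pt}{}{r-2}{r-i-2}$; this collapses the bracket and produces precisely $b_i^{(r)}=\tfrac{r-i}{i+1}\genfrac{\langle}{\rangle}{0pt}{}{r}{r-i}\genfrac{\langle}{\rangle}{0pt}{}{-1}{i-1}$. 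For $i=0$, the values $a_0^{(n)}=-\genfrac{\langle}{\rangle}{0pt}{}{n-2}{n-2}$ together with the recurrence $\genfrac{\langle}{\rangle}{0pt}{}{n}{n}=t\genfrac{\langle}{\rangle}{0pt}{}{n-1}{n-1}+(1-t)\genfrac{\langle}{\rangle}{0pt}{}{n-2}{n-2}$ give $b_0^{(r)}=-r\genfrac{\langle}{\rangle}{0pt}{}{r-1}{r-1}$, the constant term in the statement.

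The coefficient $b_{-1}^{(r)}$ of $e^{-u}$ is the homogeneous part of the solution; the matching leaves it free (the factor $i+1$ vanishes at $i=-1$), so it must be fixed by $\psi_{r,1}(0)=0$, that is $\sum_i b_i^{(r)}=0$. This yields $b_{-1}^{(r)}=r\genfrac{\langle}{\rangle}{0pt}{}{r-1}{r-1}-\sum_{i=1}^{r-1}\tfrac{r-i}{i+1}\genfrac{\langle}{\rangle}{0pt}{}{r}{r-i}\genfrac{\langle}{\rangle}{0pt}{}{-1}{i-1}$, and the whole proposition comes down to identifying this with $\tfrac{r+1}{1-t}\genfrac{\langle}{\rangle}{0pt}{}{r}{r+1}$. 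I expect this evaluation to be the main obstacle, since it is the only step that is not a direct consequence of the recurrence.

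To carry it out I would first extend the sum to $i=r$, the new term vanishing because $r-i=0$, and then decompose the coefficient via $\tfrac{r-i}{i+1}=\tfrac{r+1}{i+1}-1$ and $\tfrac{1}{i+1}=1-\tfrac{i}{i+1}$. This reduces $b_{-1}^{(r)}$ to two sums I can evaluate: $\sum_{i=1}^{r}\tfrac{i}{i+1}\genfrac{\langle}{\rangle}{0pt}{}{r}{r-i}\genfrac{\langle}{\rangle}{0pt}{}{-1}{i-1}=\tfrac{1}{1-t}\genfrac{\langle}{\rangle}{0pt}{}{r}{r+1}$ by Proposition 1 with $n=r$, and $\sum_{i=1}^{r}\genfrac{\langle}{\rangle}{0pt}{}{r}{r-i}\genfrac{\langle}{\rangle}{0pt}{}{-1}{i-1}=\genfrac{\langle}{\rangle}{0pt}{}{r-1}{r-1}$, which after the substitution $j=r-i$ is the Chu-Vandermonde formula of Corollary 3 written with $x=r$, $y=-1$, $k=r-1$. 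Substituting these back, the $\genfrac{\langle}{\rangle}{0pt}{}{r-1}{r-1}$ contributions cancel and leave $b_{-1}^{(r)}=\tfrac{r+1}{1-t}\genfrac{\langle}{\rangle}{0pt}{}{r}{r+1}$, which finishes the proof.
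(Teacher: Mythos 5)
Your proposal is correct and follows essentially the same route as the paper: the same exponential ansatz, the same coefficient matching collapsed by the defining recurrence for $1\le i\le r-1$ and $i=0$, and the same determination of the $e^{-u}$ coefficient from $\psi_{r,1}(0)=0$ by splitting $\tfrac{r-i}{i+1}$ into the two sums evaluated by Corollary 3 (Chu--Vandermonde with $x=r$, $y=-1$, $k=r-1$) and Proposition 1 with $n=r$. Your decomposition $\tfrac{r-i}{i+1}=r-(r+1)\tfrac{i}{i+1}$ is exactly the paper's rewriting $\tfrac{r(i+1)-(r+1)i}{i+1}$, just presented in two explicit steps rather than one condensed chain.
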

\begin{proof}
It is similar to the previous one. Inspired by the $t=1$ case, we look for a solution of (5.4) under the form 
\[(-1)^{r+1}(r+1)!\,\psi_{r,1}(u)=\sum_{i=-1}^{r-1} b_i^{(r)} e^{iu}.\]
By identification of the coefficients of exponentials, we obtain
\[(i+1)b_i^{(r)}=r(ta_i^{(r)}+(1-t)a_i^{(r-1)}).\]
For $1\le i\le r-1$ it yields
\begin{align*}
(i+1)b_i^{(r)}&=r\left(t\genfrac{\langle}{\rangle}{0pt}{}{r-1}{r-i-1} +(1-t)\genfrac{\langle}{\rangle}{0pt}{}{r-2}{r-i-2} \right)\genfrac{\langle}{\rangle}{0pt}{}{-1}{i-1}\\
&=(r-i)\genfrac{\langle}{\rangle}{0pt}{}{r}{r-i} \genfrac{\langle}{\rangle}{0pt}{}{-1}{i-1}.
\end{align*}
Similarly for $i=0$ we get
\[b_0^{(r)}=-r\left(t\genfrac{\langle}{\rangle}{0pt}{}{r-2}{r-2} +(1-t)\genfrac{\langle}{\rangle}{0pt}{}{r-3}{r-3} \right)=-r\genfrac{\langle}{\rangle}{0pt}{}{r-1}{r-1}.\]
For $i=-1$ the value of $b_{-1}^{(r)}$ is not defined. But we may obtain
\[b_{-1}^{(r)}=\frac{r+1}{1-t}\genfrac{\langle}{\rangle}{0pt}{}{r}{r+1}\] 
from the initial condition
\[\psi_{r,1}(0)=\sum_{i=-1}^{r-1} b_i^{(r)}=0.\]
Actually applying Corollary 3 and Proposition 1, we have
\begin{align*}
-b_{-1}^{(r)}&=\sum_{i=1}^{r-1}\frac{r-i}{i+1}
\genfrac{\langle}{\rangle}{0pt}{}{r}{r-i} \genfrac{\langle}{\rangle}{0pt}{}{-1}{i-1}-r\genfrac{\langle}{\rangle}{0pt}{}{r-1}{r-1}\\
&=\sum_{i=1}^{r}\frac{r(i+1)-(r+1)i}{i+1}
\genfrac{\langle}{\rangle}{0pt}{}{r}{r-i} \genfrac{\langle}{\rangle}{0pt}{}{-1}{i-1}-r\genfrac{\langle}{\rangle}{0pt}{}{r-1}{r-1}\\
&=\sum_{i=0}^{r}\frac{-(r+1)i}{i+1}
\genfrac{\langle}{\rangle}{0pt}{}{r}{r-i} \genfrac{\langle}{\rangle}{0pt}{}{-1}{i-1}=-\frac{r+1}{1-t}\genfrac{\langle}{\rangle}{0pt}{}{r}{r+1}.
\end{align*}
\end{proof}

Unfortunately the situation becomes quickly very messy and a general formula for $\psi_{r,1^{s}}(u)$ is as yet unknown. The following case is obtained by putting $s=1$ in (5.2), which leads to define $\psi_{r,1^{2}}(u)$ by
\begin{multline*}
r\psi^{\prime}_{r+1,1}(u)-2 \psi^{\prime}_{r,1^{2}}(u)=r^2\psi_{r+1,1}(u)+
4 \psi_{r,1^{2}}(u)\\
+t(r+1)\psi_{r,1}(u)-(1-t)\big(\psi_{r-1,1}(u)+\psi_{r}(u)\big),
\end{multline*}
with the initial condition $\psi_{r,1^{2}}(0)=0$. Inspired by the $t=1$ case, we look for a solution under the form 
\[(-1)^{r}(r+2)!\,\psi_{r,1^2}(u)=\sum_{i=-2}^{r-1} c_i^{(r)} e^{iu},\]
and we obtain the recurrence relation
\[-2(i+2)c_i^{(r)}=r(r-i)b_i^{(r+1)}-(r+1)(r+2)\big(tb_i^{(r)}+(1-t)(b_i^{(r-1)}+a_i^{(r)})\big).\]
For instance
\begin{align*}
-4c_0^{(r)}&=r^2b_0^{(r+1)}-(r+1)(r+2)\big(tb_0^{(r)}+(1-t)(b_0^{(r-1)}+a_0^{(r)})\big)\\
&=-r^2(r+1)\genfrac{\langle}{\rangle}{0pt}{}{r}{r}+(r+1)(r+2)\left(tr\genfrac{\langle}{\rangle}{0pt}{}{r-1}{r-1}+(1-t)((r-1)+1)\genfrac{\langle}{\rangle}{0pt}{}{r-2}{r-2}\right)\\
&=2r(r+1)\genfrac{\langle}{\rangle}{0pt}{}{r}{r}.
\end{align*}
The reader may check that the solutions are given by
\begin{align*}
&(-1)^{r}(r+2)!\, \psi_{r,1^2}(u)=\\
&\sum_{i=1}^{r-1}\left(\frac{(r-i)(r-i+1)}{(i+1)(i+2)}
\genfrac{\langle}{\rangle}{0pt}{}{r+1}{r-i+1} 
-(1-t)\frac{i(r+1)(r+2)}{2(i+1)(i+2)}
\genfrac{\langle}{\rangle}{0pt}{}{r-1}{r-i-1}\right)
\genfrac{\langle}{\rangle}{0pt}{}{-1}{i-1}e^{iu}\\
&-\binom{r+1}{2}\genfrac{\langle}{\rangle}{0pt}{}{r}{r}+
\binom{r+2}{2}\left(\frac{2e^{-u}}{1-t}
\genfrac{\langle}{\rangle}{0pt}{}{r+1}{r+2}-\genfrac{\langle}{\rangle}{0pt}{}{r-1}{r}e^{-u}+\frac{e^{-2u}}{1-t}
\genfrac{\langle}{\rangle}{0pt}{}{r}{r+2}\right).
\end{align*} 

\section*{Acknowledgements}
It is a pleasure to thank Christian Krattenthaler for a proof of Theorem 1.

\end{document}